\theoremstyle{plain}
\newtheorem{theorem}{Theorem}
\newtheorem{lemma}{Lemma}
\newtheorem*{theo*}{Theorem}
\newtheorem{corollary}{Corollary}
\theoremstyle{definition}
\newtheorem*{definition*}{Definition}
\begin{document}
\sloppy
\title[Wildness of the problem of classifying  nilpotent Lie algebras...]
{Wildness of the problem of classifying  nilpotent \\  Lie algebras of vector fields in four variables}
\author
{V. M. Bondarenko, A. P.  Petravchuk}
\address{V.M.Bondarenko:
Institute of Mathematics, National Academy of Sciences of Ukraine,
Tereschenkivska street, 3, 01004 Kyiv, Ukraine}
\email{vitalij.bond@gmail.com}
\address{Anatoliy P. Petravchuk:
Department of Algebra and Mathematical Logic, Faculty of Mechanics and Mathematics,
Taras Shevchenko National University of Kyiv, 64, Volodymyrska street, 01033  Kyiv, Ukraine}
\email{aptr@univ.kiev.ua , apetrav@gmail.com}
\date{\today}
\keywords{nilpotent Lie algebra, wild problem, vector field, derivation}
\subjclass[2000]{Primary 17B66, 16G60; Secondary 34C14}

%
\begin{abstract}
Let  $\mathbb F$ be a field $\mathbb F $ of characteristic zero. Let $W_{n}(\mathbb F)$ be the Lie algebra of all
$\mathbb F$-derivations with  the Lie bracket $[D_1, D_2]:=D_1D_2-D_2D_1$ on the polynomial ring $\mathbb F [x_1, \ldots , x_n]$. The problem of classifying  finite dimensional subalgebras of $W_{n}(\mathbb F)$ was solved  if $ n\leq 2$ and $\mathbb F=\mathbb C$ or $\mathbb F=\mathbb R.$ We prove that this problem is wild if $n\geq 4$, which means that it contains the classical unsolved problem of classifying matrix pairs up to similarity. The structure of finite dimensional subalgebras of  $W_{n}(\mathbb F)$ is interesting since each derivation in case $\mathbb F=\mathbb R$ can be considered as a vector field with polynomial coefficients on the manifold $\mathbb R^{n}.$
 \end{abstract}
\maketitle
\centerline{Dedicated to the 70-th birthday  of Professor V.V.~Sergeichuk}

\section{Introduction}

Let $\mathbb F $ be a field of characteristic zero. An \emph{$\mathbb F$-derivation} of the polynomial ring $\mathbb F [x_1, \ldots , x_n]$ is an $\mathbb F$-linear map $D: \mathbb F [x_1, \ldots , x_n]\to \mathbb F [x_1, \ldots , x_n]$ satisfying the Leibniz's rule
$D(fg)=D(f)g+fD(g)$. Every $\mathbb F$-derivation of $\mathbb F [x_1, \ldots , x_n]$ can be uniquely written in the form
$$f_1\frac{\partial}{\partial x_1}+\cdots +f_n\frac{\partial}{\partial x_n},$$ in which $f_1, \ldots , f_n\in \mathbb F [x_1, \ldots , x_n]$ and $\frac{\partial}{\partial x_i}$  are the partial derivatives.
We denote  by $W_{n}(\mathbb F)$ the vector space over $\mathbb F$ of all $\mathbb F$-derivations of $\mathbb F [x_1, \ldots , x_n]$. Clearly, $W_{n}(\mathbb F)$ is the Lie algebra with Lie bracket
$$[D_1, D_2]:=D_1D_2-D_2D_1,\qquad D_1, D_2\in W_{n}(\mathbb F).$$

The structure of finite dimensional subalgebras of the Lie algebra $W_{n}(\mathbb R)$ is interesting because its derivations can be considered as vector fields on the manifold $\mathbb R^{n}$ with polynomial coefficients.
Finite dimensional Lie algebras of vector fields on smooth manifolds correspond to Lie groups of local diffeomorphisms, which are closely connected with symmetries of PDE's; see  \cite{Lie1}.

Finite dimensional subalgebras of $W_1(\mathbb R),$ $W_1(\mathbb C)$, and $W_2(\mathbb C)$ were classified by Sophus Lie \cite{Lie1}. The classification of finite dimensional subalgebras of  $W_2(\mathbb R)$ was completed by Gonz\'alez-L\'opez, Kamran, and Olver \cite{Olver2}. Note that the papers \cite{Lie1,Olver2} describe the wider class of Lie algebras of vector fields with analytical coefficients.

The problem of classifying finite dimensional subalgebras of $W_3(\mathbb R)$ and $W_3(\mathbb C)$ remains open. They were partially descried by Sophus Lie \cite{Lie1}.  Their classification by Amaldi \cite{Amaldi1,Amaldi2} is uncomplete.

We prove that the problem of classifying finite dimensional subalgebras of $W_4(\mathbb F)$ is wild, and so it is hopeless. A classification problem is \emph{wild} if it contains the problem of classifying pairs of square matrices of the same size up to similarity transformations
\[
(A,B)\mapsto (S^{-1}AS,S^{-1}BS),\qquad S\text{ is nonsingular;}
\]
the other problems are \emph{tame}. These
terms were introduced by Donovan and Freislich \cite{Donovan} in analogy with the partition of
animals into tame and wild ones.
Precise definitions and the proof that each classification problem is either wild or tame are given by Drozd \cite{Drozd,Drozd1}
for a wide class of classification problems that includes the problem of classifying representations of finite-dimensional algebras (a geometric form of Drozd's theorem is given in \cite{gab_ukr,serg_can}).

The ground for these notions was given by
Gelfand and Ponomarev \cite{GP} in 1969; they have proved that the problem of classifying pairs of
commuting nilpotent matrices up to similarity contains the problem of classifying matrix
$t$-tuples up to similarity for each $t$. It also contains the problem of classifying representations of arbitrary quivers (i.e., arbitrary systems of vector spaces and linear mappings) and partially ordered sets; see \cite{bel_compl}.
Thus, all wild problems have the same complexity and the solution of one of them would imply the solution of each other. Analogous statements for systems of linear
and semilinear mappings and for systems of tensors of order at most 3 are given in \cite{de_semi} and \cite{fut_tens}. The wildness of the problem of classifying some classes of algebras and Lie algebras is proved in \cite{BBLPS,FKPS}.

Our main result is Theorem 2, which states that \emph{the problem of classifying finite dimensional nilpotent Lie algebras of vector fields in four variables is wild.} The case $n=3$ is discussed by Doubrov  \cite{Doubrov}.
\section{Preliminaries}

All Lie algebras that we consider are over a field $\mathbb F$ of characteristic zero. The \emph{triangular algebra} $u_{n}(\mathbb F)$ is the subalgebra  of the Lie algebra $W_n(\mathbb F)$ consisting of all derivations of the form
$$D=f_1(x_2, \ldots , x_n)\frac{\partial}{\partial x_1}+f_2(x_3, \ldots , x_n)\frac{\partial}{\partial x_2}+\cdots  +f_n\frac{\partial}{\partial x_n},$$ in which $f_n\in \mathbb F$. This Lie algebra is studied by Bavula \cite{Bavula}; it is locally nilpotent but not nilpotent (recall that a Lie algebra is \emph{locally nilpotent} if every its finitely generated subalgebra is nilpotent).

Each pair of commuting matrices $S, T\in M_n(\mathbb F)$ can be associated with the module $V_{S, T}$ over $\mathbb F[x, y]$ as follows. Let $V$ be any $n$-dimensional vector space over $\mathbb F$ with a fixed basis. The matrices $S$ and $T$ define commuting linear operators on $V$, which we denote by the same letters. Then  $V$ with multiplication $$
f(x, y)\cdot v:=f(S, T)(v),\qquad f(x, y)\in \mathbb F[x, y],\ v\in V
$$
is the left module  over $\mathbb F[x, y]$, which we denote by $V_{S, T}$.

We call $V_{S, T}$ \emph{nilpotent} if the linear operators $S$ and $T$ are nilpotent. The problem of classifying all $\mathbb F[x, y]$-modules is wild because it is equivalent to the problem of classifying matrix pairs up to similarity.

Let  $L_1$ and $L_2$ be Lie algebras over $\mathbb F $. Let $\varphi : L_1 \to \operatorname{Der}{L_2}$ be a homomorphism of Lie algebras, in which $\operatorname{Der}{L_2}$ is the Lie algebra of all $\mathbb F$-derivations of $L_2.$ The \emph{external semidirect product} of $L_1$ and $L_2$ (denoted by $L_1\rightthreetimes _{\varphi}L_2$ or $L_1\rightthreetimes L_2$) is the vector space $L_1\oplus L_2$ with the  Lie bracket $$[(a_1, b_1) , (a_2, b_2)]=([a_1, a_2], D_1(b_2)-D_2(b_1)),$$ in which $D_1:=\varphi (a_1)$ and $D_2:=\varphi (a_2)$.
If a Lie algebra $L$ contains both an ideal $N$ and a subalgebra $B$ such that $L=N+B$ and $ N\cap B=0,$  then $L$ is the \emph{internal semidirect product}  of  $B$ and $N$ (we write $L=B\rightthreetimes N$) with the natural homomorphism of $B$ into the Lie algebra $\operatorname{Der}_{\mathbb F}N.$

\section{The universal $\mathbb F[x, y]$-module}

Let us construct  a module  of countable dimension that contains an isomorphic copy of each
finite-dimensional nilpotent module over $\mathbb F[x, y].$  Denote by $W_{xy,z}$ the underlying vector space of the polynomial
algebra ${\mathbb F}[x,y,z]$ and consider the linear operators $\frac{\partial}{\partial x}$ and $\frac{\partial}{\partial y}$ on it. Since $[\frac{\partial}{\partial x}, \frac{\partial}{\partial y}]=0,$ the  vector space
$W_{xy,z}$  is   a left ${\mathbb F}[x,y]$-module with multiplication
\[
x\cdot f:= \frac{\partial f}{\partial x},\quad y\cdot f:= \frac{\partial f}{\partial y}\qquad\text{for all } f\in {\mathbb F}[x,y,z].
\]  Note that $\operatorname{Ker} \frac{\partial}{\partial x}\cap \operatorname{Ker} \frac{\partial}{\partial y}=K[z]$.

We think that the following statement is known, but we do not know where it is published.

\begin{lemma}\label{lem1}
Let $f,g\in {\mathbb F}[x,y,z]$ satisfy $\frac{\partial f}{\partial y}=\frac{\partial g}{\partial x}$. Then there exists $h\in {\mathbb F}[x,y,z]$ such that
$\frac{\partial h}{\partial x}=f$ and $\frac{\partial h}{\partial y}=g$.
The polynomial $h$  is  determined by $f$ and $g$ uniquely up to summands from ${\mathbb F}[z]$.
\end{lemma}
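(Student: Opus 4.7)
The plan is to construct $h$ by a two-step formal integration, mimicking the proof of the Poincar\'e lemma but staying inside the polynomial ring. First I would define
\[
h_1(x,y,z):=\int_0^x f(t,y,z)\,dt,
\]
interpreted as the unique polynomial in ${\mathbb F}[x,y,z]$ with $\frac{\partial h_1}{\partial x}=f$ and zero constant term (in $x$); this exists since $\mathbb F$ has characteristic zero, so formal antidifferentiation of a polynomial in one variable with coefficients in ${\mathbb F}[y,z]$ is well-defined.

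Next, I would consider the polynomial $G:=g-\frac{\partial h_1}{\partial y}\in {\mathbb F}[x,y,z]$ and show that it does not actually depend on $x$. For this, I differentiate with respect to $x$:
\[
\frac{\partial G}{\partial x}=\frac{\partial g}{\partial x}-\frac{\partial^2 h_1}{\partial x\,\partial y}
=\frac{\partial g}{\partial x}-\frac{\partial}{\partial y}\!\Bigl(\frac{\partial h_1}{\partial x}\Bigr)
=\frac{\partial g}{\partial x}-\frac{\partial f}{\partial y}=0
\]
by hypothesis. Hence $G\in {\mathbb F}[y,z]$, and I may set $h_2(y,z):=\int_0^y G(s,z)\,ds\in {\mathbb F}[y,z]$, so that $\frac{\partial h_2}{\partial y}=G$ and $\frac{\partial h_2}{\partial x}=0$. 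Taking $h:=h_1+h_2$, a direct check gives $\frac{\partial h}{\partial x}=f+0=f$ and $\frac{\partial h}{\partial y}=\frac{\partial h_1}{\partial y}+G=g$.

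For uniqueness, if $h'$ is another such polynomial, then $h-h'\in {\mathbb F}[x,y,z]$ has vanishing partial derivatives with respect to both $x$ and $y$, hence lies in $\operatorname{Ker}\frac{\partial}{\partial x}\cap\operatorname{Ker}\frac{\partial}{\partial y}={\mathbb F}[z]$, as noted in the paragraph preceding the lemma. There is no real obstacle here; the only point to watch is that the antiderivatives actually produce polynomials (which is automatic in characteristic zero) and that the compatibility condition $\frac{\partial f}{\partial y}=\frac{\partial g}{\partial x}$ is used precisely at the step that eliminates the $x$-dependence of $G$.
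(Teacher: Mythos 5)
Your proof is correct and follows essentially the same route as the paper's: integrate $f$ formally in $x$, then integrate the remaining $y$-dependence, and use $\operatorname{Ker}\frac{\partial}{\partial x}\cap\operatorname{Ker}\frac{\partial}{\partial y}={\mathbb F}[z]$ for uniqueness. In fact you are slightly more careful than the paper at the one delicate point, namely explicitly verifying via the hypothesis $\frac{\partial f}{\partial y}=\frac{\partial g}{\partial x}$ that $G=g-\frac{\partial h_1}{\partial y}$ is independent of $x$, so that its $y$-antiderivative is a legitimate polynomial in $y$ and $z$ alone.
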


\begin{proof}
If such a polynomial exists, then we have $h'_{x}=f$ and so $h(x, y, z)=\int f(x, y, z)dx+u(y, z),$ in which $\int fdx$ is a primitive polynomial function  and $u(y, z)$ is an arbitrary polynomial in variables $y$ and $z.$ Differentiating both the sides of this equality, we find
$$h'_{y}=g(x, y, z)=\frac{\partial}{\partial y}\int fdx+u'_{y}(y, z).$$
Thus, $u'_{y}=g(x,y,z)-\frac{\partial}{\partial y}\int fdx.$
Integrating on $y$ this equality,  we obtain
$$u(y,z)=\int g(x,y,z)dy+\int\Big(\frac{\partial}{\partial y}\int fdx\Big)dy+v(z),$$
in which $v(z)$ is an arbitrary polynomial over $\mathbb F.$
Hence, the polynomial $h(x,y,z)=\int f(x,y,z)dx+u(y,z)$ satisfies $h'_{x}=f$ and $h'_{y}=g.$ If there exists a polynomial $h_1$ satisfying $(h_{1}')_{x}=f$ and $(h_{1}')_{y}=g,$ then $(h-h_{1})'_{x}=0$ and $(h-h_{1})'_{y}=0$;
therefore, $h-h_1\in \mathbb F[z].$
\end{proof}

\begin{theorem}\label{th-iso}
Let $V$ be a finite-dimensional vector space over a field ${\mathbb F}$. Let $S$ and $T$ be
commuting nilpotent linear operators on $V$. Then the ${\mathbb F}[x,y]$-module $V_{S,T}$  is isomorphic to a
 submodule  of the $\mathbb F[x,y]$-module $W_{xy,z}$.
\end{theorem}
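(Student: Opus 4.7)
The plan is to construct the embedding $\varphi\colon V_{S,T} \hookrightarrow W_{xy,z}$ inductively along a composition series of $V$, integrating one generator at a time via Lemma~\ref{lem1}. Since $S, T$ are commuting nilpotent operators, the action factors through an Artinian local quotient of ${\mathbb F}[x,y]$, and so $V$ admits a chain of submodules
$$0 = U_0 \subset U_1 \subset \cdots \subset U_n = V$$
with $\dim U_i/U_{i-1} = 1$ and both $S, T$ acting as zero on every quotient; equivalently each $v_i \in U_i \setminus U_{i-1}$ satisfies $Sv_i, Tv_i \in U_{i-1}$. Such a filtration is built by iteratively enlarging $U_{i-1}$ by an element whose image in $V/U_{i-1}$ lies in the socle, which is nonzero whenever $U_{i-1}\ne V$ by nilpotency.

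For the inductive step I would suppose an embedding $\psi_{i-1}\colon U_{i-1} \hookrightarrow W_{xy,z}$ has been built and extend it by choosing a value for $\psi_i(v_i)$. Setting $f := \psi_{i-1}(Sv_i)$ and $g := \psi_{i-1}(Tv_i)$, the relation $ST = TS$ together with the fact that $\psi_{i-1}$ is a module homomorphism gives
$$\frac{\partial f}{\partial y} = \psi_{i-1}(TSv_i) = \psi_{i-1}(STv_i) = \frac{\partial g}{\partial x},$$
so Lemma~\ref{lem1} supplies some $h \in {\mathbb F}[x,y,z]$ with $\partial h/\partial x = f$ and $\partial h/\partial y = g$, uniquely determined up to an additive summand in ${\mathbb F}[z]$.

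I expect the main subtlety to lie in preserving injectivity at each stage, and the ${\mathbb F}[z]$-freedom from Lemma~\ref{lem1} is exactly what resolves it. Since $\psi_{i-1}(U_{i-1})$ is finite dimensional, its elements have bounded $z$-degree, so I can pick $N$ larger than that bound and than $\deg_z h$, and set $\psi_i(v_i) := h + z^N$. The module-map relations persist because $z^N$ is annihilated by both $\partial/\partial x$ and $\partial/\partial y$, while the $z^N$-coefficient of $\psi_i(v_i)$ equals $1$ whereas that of every element of $\psi_{i-1}(U_{i-1})$ is $0$; hence the linear extension $u + \beta v_i \mapsto \psi_{i-1}(u) + \beta\,\psi_i(v_i)$ is injective on $U_i$. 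Iterating to $i = n$ yields $\psi_n\colon V_{S,T} \hookrightarrow W_{xy,z}$ as required. The role of the auxiliary variable $z$ is precisely to guarantee that $\ker(\partial/\partial x) \cap \ker(\partial/\partial y) = {\mathbb F}[z]$ is infinite dimensional, providing fresh directions in which to perturb $h$ at every step.
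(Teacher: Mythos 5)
Your proposal is correct and follows essentially the same route as the paper: induction along a chain of codimension-one submodules with trivial induced action, using Lemma~\ref{lem1} to integrate the images of $Sv_i$ and $Tv_i$, and exploiting the ${\mathbb F}[z]$-ambiguity to push the new generator outside the previously constructed submodule. Your explicit $z^N$-coefficient argument for injectivity is a slightly more detailed version of the paper's ``choose $u(z)$ of sufficiently high degree so that $h+u(z)\notin N$'' step.
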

\begin{proof}
We use induction on $n:=\dim_{\mathbb F} V$.
If $n=1,$ then $W_{xy,z}=\{a1|a\in{\mathbb F}\}$; we take any $0\ne v\in V$ and define the monomorphism $\varphi:V_{S,T}\to W_{xy,z}$ by $\varphi(v)=1$.

Let $n>1.$ Since the operators $S$ and $T$ are nilpotent, there exists an $(n-1)$-dimensional subspace $U\subset V_{S,T}$
that is invariant for the linear operators $S$ and $T$ (i.e., $U$ is a submodule of $V_{S,T}$). By the inductive hypothesis,  there exists  an isomorphism $\varphi: U\to N$  on some submodule $N$ of $W_{xy,z}$.
Take an arbitrary  $v\in V_{S,T}\setminus U$.  Since  $S$ and $T$ are nilpotent, we have   $S(v)\in U$ and $T(v)\in U$, which means that the images $\varphi(S(v))$ and  $\varphi(T(v))$ belong to $N$, and therefore they   are polynomials of $x,y,z$. Write $\varphi(S(v))=f(x,y,z)$ and  $\varphi(T(v))=g(x,y,z)$.
The equalities
\begin{equation}\label{hjk}
\begin{split}
\varphi(T(S(v)))&=\frac{\partial}{\partial y}(\varphi(S(v))=\frac{\partial f}{\partial y}
\\
\varphi(S(T(v)))&=\frac{\partial}{\partial x}(\varphi(T(v))=\frac{\partial g}{\partial x}
\end{split}
\end{equation}
hold since $S(v)\in U$ and $T(v)\in U$, and because the actions of $S$ and $T$ on $U$ induce  the actions of $\frac{\partial }{\partial x}$ and $\frac{\partial}{\partial y}$ on $N$. By the conditions of the theorem,
$ST=TS$, and so \eqref{hjk} ensure $\frac{\partial f}{\partial y}=\frac{\partial g}{\partial x}$. By Lemma \ref{lem1}, there exists a polynomial $h=h(x,y,z)\in {\mathbb F}[x,y,z]$ such that
\begin{equation}\label{asq}
\frac{\partial h}{\partial x}=f,\qquad \frac{\partial h}{\partial y}=g.
\end{equation}

We can suppose that $h\not \in N.$ Indeed,
if $h\in N$, then we take $u(z)\in \mathbb F[z]$ of sufficiently high degree,  replace $h$ by the polynomial $h_1=h+u(z)$, and get  $h_1\notin N$ for which \eqref{asq} holds with $h_1$ instead of $h$.

The vector subspace $N+{\mathbb F}\langle h \rangle$ is a submodule of the
${\mathbb F}[x,y]$-module $W_{xy,z}$.
Let
$$
\varphi_1: V=U+{\mathbb F}\langle v \rangle\to N+{\mathbb F}\langle h \rangle
$$
be the  ${\mathbb F}$-linear map
such that  $\varphi_1(u)=\varphi(u)$ on $U$ and $\varphi_1(v)=h$.
It is verified by  direct calculations that  $\varphi_1$ is an isomorphism of  the ${\mathbb F}[x]$-module
$V_{S,T}$ on the submodule   $N+{\mathbb F}\langle h \rangle$ of $W_{xy,z}.$
 \end{proof}

Let $\theta$ be a linear automorphism of $\mathbb{F}[x,y]$ determined by some linear polynomials  \[
\theta(x)=\alpha_{11}x+\alpha_{12}y,
\qquad \theta(y)=\alpha_{21}x+\alpha_{22}y,
\]
in which $\alpha_{ij}\in \mathbb{F}$ and $\det[\alpha_{ij}]\ne 0$. For any  $\mathbb{F}[x,y]$-module $V$, the automorphism $\theta$  defines the  ``twisted''  module $V_\theta$ by the rule
\[
x\circ v=\theta(x)\cdot v,\quad   y\circ v=\theta(y)\cdot v\qquad \text{for all }v\in V,
\]
where the multiplication in the right-hand sides is the multiplication in the module $V$.

We say that $\mathbb{F}[x,y]$-modules $V$ and $W$ are \emph{weakly isomorphic} if there exists an $\mathbb{F}[x,y]$-module $U$ such that $V$ and $U$ are isomorphic and $W=U_\theta$ for some linear automorphism $\theta$.
By \cite{FKPS}, the problem of classifying  finite-dimensional nilpotent $\mathbb{F}[x,y]$-modules up to weak isomorphism is wild.
Thus, Theorem \ref{th-iso} ensures the following corollary.

\begin{corollary}\label{corol-1}
The problem of classifying finite-dimensional submodules of
$W_{xy,z}$ up to weak isomorphism is wild.
\end{corollary}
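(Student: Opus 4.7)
The strategy is to reduce the corollary directly to the wildness result of \cite{FKPS} by showing that, up to isomorphism, the class of finite-dimensional submodules of $W_{xy,z}$ coincides with the class of all finite-dimensional nilpotent $\mathbb{F}[x,y]$-modules, and that the notion of weak isomorphism matches under this identification.

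First, I would observe that every finite-dimensional submodule $M\subseteq W_{xy,z}$ is automatically nilpotent. The operators $\partial/\partial x$ and $\partial/\partial y$ are locally nilpotent on $\mathbb{F}[x,y,z]$, since a sufficiently high power of either annihilates any given polynomial. Choosing a finite basis of $M$ and taking a uniform exponent that annihilates each basis element then forces the actions of $x$ and $y$ on $M$ to be nilpotent operators.

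Second, Theorem \ref{th-iso} provides the converse inclusion: every finite-dimensional nilpotent $\mathbb{F}[x,y]$-module $V_{S,T}$ is isomorphic to some submodule of $W_{xy,z}$. Combining these two facts, the collection of isomorphism classes of finite-dimensional submodules of $W_{xy,z}$ is identified with the collection of isomorphism classes of finite-dimensional nilpotent $\mathbb{F}[x,y]$-modules.

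Third, I would verify that the weak-isomorphism relation is respected by this identification. For a linear automorphism $\theta$ with matrix $(\alpha_{ij})$, the twisted module $(V_{S,T})_\theta$ is $V_{\alpha_{11}S+\alpha_{12}T,\,\alpha_{21}S+\alpha_{22}T}$; the new operators commute because $S$ and $T$ do, and they are nilpotent because a commuting linear combination of commuting nilpotents is nilpotent (expand a high enough power using commutativity so that one factor is annihilated in every term). Hence the class of finite-dimensional nilpotent modules is closed under twisting, and classifying finite-dimensional submodules of $W_{xy,z}$ up to weak isomorphism is precisely the same problem as classifying finite-dimensional nilpotent $\mathbb{F}[x,y]$-modules up to weak isomorphism, which is wild by \cite{FKPS}. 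I anticipate no serious obstacle: the only slightly subtle point is the uniform annihilation argument in the first step, and it is a routine finite-dimensional fact.
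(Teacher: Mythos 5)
Your proposal is correct and follows essentially the same route as the paper, which derives the corollary directly from Theorem \ref{th-iso} together with the wildness (from \cite{FKPS}) of classifying finite-dimensional nilpotent $\mathbb{F}[x,y]$-modules up to weak isomorphism. You merely spell out two details the paper leaves implicit --- that every finite-dimensional submodule of $W_{xy,z}$ is automatically nilpotent, and that twisting preserves nilpotency --- both of which are verified correctly.
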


\section{The main theorem}

\begin{lemma}\label{wild}
The problem of classifying finite dimensional  nilpotent Lie algebras of the form $L=B\rightthreetimes W,$ in which $W$ is an abelian ideal of $L$ of codimension $2,$ is wild.
\end{lemma}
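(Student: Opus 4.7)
The plan is to reduce from the wild problem established in Corollary \ref{corol-1}: the weak-isomorphism classification of finite-dimensional nilpotent $\mathbb F[x,y]$-modules. To each such module $V = V_{S,T}$ I associate the Lie algebra $L(V) := B \rightthreetimes_\varphi V$, where $B = \mathbb F e_1 \oplus \mathbb F e_2$ is the $2$-dimensional abelian Lie algebra, $V$ is regarded as an abelian Lie algebra, and $\varphi(e_1) = S$, $\varphi(e_2) = T$. The relation $[S,T] = 0$ makes $\varphi$ a Lie algebra map into $\operatorname{Der}(V)$, and since $S, T$ are nilpotent, Engel's theorem gives the nilpotency of $L(V)$; by construction $V$ is an abelian ideal of codimension $2$, so $L(V)$ belongs to the class being classified.

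A direct bracket computation shows that weak isomorphism of modules induces isomorphism of the associated Lie algebras: an honest module isomorphism $f$ lifts to $(\operatorname{id}_B, f)$, and a twist $V' = V_\theta$ is accounted for by the identity on $V$ together with a corresponding change of basis on $B$ determined by $\theta$. The converse --- that $L(V) \cong L(V')$ forces $V$ and $V'$ to be weakly isomorphic --- is the main obstacle, because in general $V$ need not be a characteristic ideal of $L(V)$.

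To handle this, I would introduce the genericity condition $(\ast)$: the restriction map $B \to \operatorname{End}_{\mathbb F}(SV + TV)$, $b \mapsto \varphi(b)|_{SV+TV}$, is injective. A direct calculation with centralizers gives $C_{L(V)}([L(V), L(V)]) = \{b \in B : \varphi(b)|_{SV+TV} = 0\} \oplus V$, so $(\ast)$ is equivalent to the equality $V = C_{L(V)}([L(V), L(V)])$, which makes $V$ a characteristic ideal of $L(V)$. Under $(\ast)$, any isomorphism $\psi : L(V) \to L(V')$ sends $V$ to $V'$, and the bracket compatibility of $\psi|_V$ with the induced linear map $B \to B'$ gives precisely a weak isomorphism of modules.

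Finally, to see that the subclass of modules satisfying $(\ast)$ is still wild, I would use the rigidification $V \mapsto V \oplus V_0$ with $V_0 := \mathbb F[x,y]/(x,y)^3$. Since the ideal $(x,y)^3$ is $\operatorname{GL}_2(\mathbb F)$-invariant, every twist $(V_0)_\theta$ is isomorphic to $V_0$. A direct check on $V_0$ alone shows that $V \oplus V_0$ always satisfies $(\ast)$ regardless of $V$, and multiset cancellation in the Krull--Schmidt theorem for finite-dimensional modules, combined with $V_0 \cong (V_0)_\theta$, recovers the weak-isomorphism class of $V$ from that of $V \oplus V_0$. Thus weak isomorphism of nilpotent modules reduces to isomorphism within the $(\ast)$-subclass, which in turn is precisely Lie-algebra isomorphism of the corresponding $L(V)$'s, and wildness propagates.
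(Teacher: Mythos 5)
Your proposal is correct, but it takes a genuinely different route from the paper: the paper's proof of this lemma is a two-line citation, invoking \cite{PetrSysak} for the equivalence between the Lie-algebra classification and the weak-isomorphism classification of nilpotent $\mathbb F[x,y]$-modules, and \cite{FKPS} for the wildness of the latter. You instead reconstruct the needed direction of that equivalence from scratch, and in doing so you correctly isolate the one real subtlety that the citation hides, namely that the abelian ideal $W$ need not be characteristic in $L=B\rightthreetimes W$, so an abstract Lie-algebra isomorphism need not respect the semidirect decomposition. Your fix is sound: the identity $C_{L(V)}([L(V),L(V)])=\{b\in B:\varphi(b)|_{SV+TV}=0\}\oplus V$ is a correct direct computation (using that $V$ is abelian and $[L,L]=SV+TV$), so condition $(\ast)$ does make $V$ characteristic, and the induced map on $L/V$ then realizes exactly the twist $\theta$ in the definition of weak isomorphism. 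The rigidifier $V_0=\mathbb F[x,y]/(x,y)^3$ works as claimed: $(x,y)^3$ is $\mathrm{GL}_2$-invariant so $(V_0)_\theta\cong V_0$; multiplication by $\alpha x+\beta y$ is nonzero on $(x,y)/(x,y)^3$ for $(\alpha,\beta)\ne(0,0)$ (note $(x,y)^2$ in place of $(x,y)^3$ would fail here, so your choice is the minimal one); and Krull--Schmidt cancellation recovers the weak-isomorphism class of $V$ from that of $V\oplus V_0$. Engel's theorem does give nilpotency of $L(V)$ since each $\varphi(b)=\alpha S+\beta T$ is nilpotent. What your approach buys is a self-contained proof that does not lean on \cite{PetrSysak}; what it costs is length, and a mild dependence on the precise formulation of the classification problem (your argument classifies the algebras $L$ up to plain Lie-algebra isomorphism, which is the stronger and more useful statement, whereas the paper's phrasing leaves the role of the chosen decomposition to the cited reference).
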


\begin{proof}
By \cite{PetrSysak}, the problem of classifying nilpotent finite dimensional Lie algebras of the form $L=B\rightthreetimes W,$ in which $W$ is an abelian ideal of $L$ of codimension $2,$ is equivalent to the problem of classifying, up to weak isomorphism, of nilpotent finite dimensional $\mathbb F[x, y]$-modules.  The latter problem is wild by \cite[Theorem 1]{FKPS}.
\end{proof}

\begin{theorem}
The problem of classifying finite dimensional nilpotent subalgebras of the Lie algebra $W_4(\mathbb F)$ is wild.
\end{theorem}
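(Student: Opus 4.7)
The plan is to realize every nilpotent Lie algebra $L$ of the form covered by Lemma~\ref{wild} as a subalgebra of $W_4(\mathbb F)$, so that the wildness asserted there transfers to the problem at hand. Let $L = B \rightthreetimes W$ with $W$ an abelian ideal of codimension two. Since $L$ is nilpotent and $B$ is two-dimensional, $B$ is automatically abelian; fix a basis $b_1, b_2$ of $B$. The adjoint action turns $W$ into a finite-dimensional $\mathbb F[x,y]$-module via $x\cdot w := [b_1,w]$ and $y\cdot w := [b_2,w]$, and the nilpotency of $L$ forces both of these operators to be nilpotent on $W$. Thus $W$ falls exactly within the scope of Theorem~\ref{th-iso}.

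Now I plan to work inside $W_4(\mathbb F)$ with coordinates $x,y,z,t$. The derivations $\partial/\partial x$ and $\partial/\partial y$ commute, every derivation of the form $h(x,y,z)\,\partial/\partial t$ lies in $W_4(\mathbb F)$, any two such derivations commute, and the fundamental brackets
\[
\Bigl[\tfrac{\partial}{\partial x},\,h\,\tfrac{\partial}{\partial t}\Bigr]=\tfrac{\partial h}{\partial x}\,\tfrac{\partial}{\partial t},\qquad \Bigl[\tfrac{\partial}{\partial y},\,h\,\tfrac{\partial}{\partial t}\Bigr]=\tfrac{\partial h}{\partial y}\,\tfrac{\partial}{\partial t}
\]
hold. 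Hence for any submodule $N\subseteq W_{xy,z}$, the subspace $\mathbb F\bigl\langle \partial/\partial x,\partial/\partial y\bigr\rangle \oplus N\cdot\partial/\partial t$ is a nilpotent subalgebra of $W_4(\mathbb F)$ whose semidirect-product structure faithfully reproduces the $\mathbb F[x,y]$-module structure on $N$. Applying Theorem~\ref{th-iso} to the module $W$ of the previous paragraph yields an isomorphism $\varphi: W \to N$ onto some finite-dimensional submodule $N\subseteq W_{xy,z}$; then the map
\[
\Phi: L\to W_4(\mathbb F),\qquad b_1\mapsto\tfrac{\partial}{\partial x},\ b_2\mapsto\tfrac{\partial}{\partial y},\ w\mapsto\varphi(w)\,\tfrac{\partial}{\partial t}\ (w\in W),
\]
will be a Lie algebra monomorphism. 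The only nontrivial bracket check reduces to the identity $\varphi(x\cdot w)=\partial\varphi(w)/\partial x$ and its $y$-analogue, both of which hold by the very definition of the module structure on $W_{xy,z}$. The image $\Phi(L)$ is then a nilpotent subalgebra of $W_4(\mathbb F)$ isomorphic to $L$.

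The step that I expect to require the most care is the final reduction: deducing wildness of the subalgebra classification in $W_4(\mathbb F)$ from wildness of the abstract isomorphism classification asserted by Lemma~\ref{wild}. The key observation is that $\Phi(L_1)\cong \Phi(L_2)$ as abstract Lie algebras implies $L_1\cong L_2$, so the assignment $L\mapsto \Phi(L)$ sends the wild family from Lemma~\ref{wild} injectively (up to isomorphism) into the family of nilpotent subalgebras of $W_4(\mathbb F)$; and distinct isomorphism classes of subalgebras remain distinct under any finer equivalence, such as conjugacy by polynomial automorphisms of $\mathbb F[x,y,z,t]$. Consequently the tame/wild dichotomy will force the classification of nilpotent subalgebras of $W_4(\mathbb F)$ to be wild as well.
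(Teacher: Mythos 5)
Your proposal is correct and follows essentially the same route as the paper: realize each nilpotent $L=B\rightthreetimes W$ with abelian codimension-two ideal inside $W_4(\mathbb F)$ as $\mathbb F\langle \partial/\partial x,\partial/\partial y\rangle\rightthreetimes N\,\partial/\partial t$ via Theorem~\ref{th-iso}, and transfer the wildness of Lemma~\ref{wild}. The only difference is cosmetic: you verify the Lie-algebra isomorphism $\Phi$ directly, where the paper cites \cite[Proposition 1]{PetrSysak}.
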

\begin{proof}
Let us consider the Lie subalgebra $M$ of $W_4(\mathbb F)$ consisting of all derivations on $\mathbb F[x, y, z, t]$ of the form
$$D=\alpha \frac{\partial}{\partial x}+\beta \frac{\partial}{\partial y}+f(x, y, z) \frac{\partial}{\partial t},$$
in which $\alpha , \beta\in \mathbb F$ and $f(x, y, z)\in \mathbb F[x, y, z].$  Clearly, $M$ is contained in  the triangular subalgebra $u_4(\mathbb F).$ Let $I$ be the abelian ideal of $M$ that consists of all the derivations of the form $f(x, y, z) \frac{\partial}{\partial t}$. Then $I$ has codimension $2$ in $M$ and it can be considered as an $\mathbb F[x, y]$-module with respect to the multiplication
\begin{align*}
&x\cdot f(x, y, z) \frac{\partial}{\partial t}= \frac{\partial f}{\partial x}\frac{\partial}{\partial t}
\ &&(\text{which is } \left[\frac{\partial}{\partial x}, f(x, y, z) \frac{\partial}{\partial t}\right] \  \mbox{in }  M),
                \\
&y\cdot f(x, y, z) \frac{\partial}{\partial t}= \frac{\partial f}{\partial y}\frac{\partial}{\partial t}
&&(\text{which is } \left[\frac{\partial}{\partial y}, f(x, y, z) \frac{\partial}{\partial t}\right] \  \mbox{in }  M).
\end{align*}
This module  $I$ over  $\mathbb F[x, y]$ is isomorphic to $W_{xy, z}$.

Denote by $\mathcal N$ the set of all finite dimensional subalgebras of the Lie algebra $M$  of the form
 $$L=\mathbb F\left\langle  \frac{\partial}{\partial x},  \frac{\partial}{\partial y}\right\rangle +V,$$
in which $V$ is a finite dimensional ideal of $M$ that is contained in $I$ (in other words, $L$ is a finite dimensional subalgebra of $M$ containing the elements $\frac{\partial}{\partial x}$ and $\frac{\partial}{\partial y}$).

Let us show that the problem of classifying Lie algebras from  $\mathcal N$ is wild. Take any finite dimensional nilpotent Lie algebra $L$ of the form $L=B\rightthreetimes W,$ where $W$ is an abelian ideal of $L$ of codimension $2$. Choosing a basis $S, T$ of $B$ over $\mathbb F,$  we associate  with $L$ the finite dimensional  $\mathbb F[x, y]$-module $W_{S, T}$.  The $\mathbb F[x, y]$-modules $I$ and $W_{xy, z}$ are isomorphic; by Theorem 1 there exists a submodule $U$ of $I$ that is isomorphic to  $W_{S, T}$. By \cite[Proposition 1]{PetrSysak},
the subalgebra $\mathbb F\langle \frac{\partial}{\partial x}, \frac{\partial}{\partial y}\rangle \rightthreetimes U$  of the Lie algebra $M$ is isomorphic to $L$.  By Lemma \ref{wild}, the problem of classifying  subalgebras from $\mathcal N$ is wild.
\end{proof}

\begin{corollary}
Let  ${\overline W}_{n}(\mathbb R)$ with $n\geq 4$  be the Lie algebra of all vector fields on $\mathbb R^{4}$ with analytical coefficients. Then the problem of classifying finite dimensional nilpotent subalgebras of ${\overline W}_{n}(\mathbb R)$  is wild.
\end{corollary}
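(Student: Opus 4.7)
The plan is to derive this corollary directly from Theorem~2 by means of two obvious Lie-algebra embeddings. First, every polynomial vector field is analytic, so $W_n(\mathbb R)$ sits inside $\overline{W}_n(\mathbb R)$ as a Lie subalgebra. Second, for $n\geq 4$ the algebra $W_4(\mathbb R)$ embeds into $W_n(\mathbb R)$ in the evident way: a derivation $f_1\frac{\partial}{\partial x_1}+\cdots+f_4\frac{\partial}{\partial x_4}$ of $\mathbb R[x_1,\ldots,x_4]$ extends uniquely to the derivation of $\mathbb R[x_1,\ldots,x_n]$ that annihilates $x_5,\ldots,x_n$, and this extension is an injective Lie homomorphism. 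Composing the two yields an embedding $\iota\colon W_4(\mathbb R)\hookrightarrow \overline{W}_n(\mathbb R)$ that preserves the Lie bracket.

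Next I would transport the wild family produced in the proof of Theorem~2. That proof exhibits a family $\mathcal N$ of finite-dimensional nilpotent subalgebras of $W_4(\mathbb R)$, indexed by the nilpotent finite-dimensional $\mathbb F[x,y]$-modules, whose classification up to abstract Lie-algebra isomorphism is shown to be wild (via Lemma~\ref{wild} together with Theorem~\ref{th-iso}). Applying $\iota$ termwise produces a family of finite-dimensional nilpotent Lie subalgebras of $\overline{W}_n(\mathbb R)$ carrying exactly the same abstract Lie-algebra structures. Consequently the classification problem for finite-dimensional nilpotent subalgebras of $\overline{W}_n(\mathbb R)$ is at least as hard as the one resolved in Theorem~2, and is therefore wild.

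The only point that requires a moment's care is the compatibility of the equivalence relations on the two problems: one must know that the classification appearing in the corollary is at least as coarse as isomorphism of abstract Lie algebras, so that distinct isomorphism classes in $\mathcal N$ remain distinct when viewed inside $\overline{W}_n(\mathbb R)$. In this paper ``classifying finite dimensional nilpotent subalgebras'' is consistently taken to mean classifying them up to abstract Lie-algebra isomorphism, and abstract isomorphism is intrinsic to the algebra itself, independent of the ambient. I therefore do not expect any real obstacle; the corollary is formally an immediate consequence of Theorem~2 and the inclusion of polynomial vector fields among analytic ones, together with the trivial observation that enlarging the number of variables cannot make the classification problem any easier.
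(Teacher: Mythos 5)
Your argument is correct and is exactly the (omitted) reasoning the paper intends: the corollary is stated without proof as an immediate consequence of Theorem~2, since polynomial vector fields in four of the $n$ variables form a Lie subalgebra of ${\overline W}_{n}(\mathbb R)$ and the classification is up to abstract Lie-algebra isomorphism. Your explicit attention to the compatibility of the equivalence relations is a worthwhile addition rather than a deviation.
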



%

\begin{thebibliography}{99}
\bibitem{Amaldi1} U. Amaldi,  Contributo all determinazione dei gruppi continui finiti dello spazio ordinario I, Giornale Mat. Battaglini Prog. Studi Univ. Ital. 39  (1901) 273--316.

\bibitem{Amaldi2} U. Amaldi,  Contributo all determinazione dei gruppi continui finiti dello spazio ordinario II, Giornale Mat. Battaglini Prog. Studi Univ. Ital. 40  (1902) 105--141.

 \bibitem{Bavula} V.V. Bavula,
Lie algebras of triangular polynomial derivations and an isomorphism criterion for their Lie factor algebras, Izv. Math. 77 (2013) 1067--1104.


\bibitem{BBLPS}
G. Belitskii, V.M. Bondarenko, R. Lipyanski, V.V. Plachotnik, V.V.~Sergeichuk, The problems of
classifying pairs of forms and local algebras with zero cube radical are wild, Linear Algebra Appl.
402 (2005) 135--142.

\bibitem{bel_compl}
G.R. Belitskii, V.V. Sergeichuk, Complexity of matrix problems, Linear Algebra Appl. 361 (2003) 203--222.


\bibitem{Donovan}
P. Donovan, M.R. Freislich, Some evidence for an extension of the Brauer--Thrall conjecture, Sonderforschungsbereich
Theor. Math. 40 (1972) 24--26.


 \bibitem{Doubrov} B. Doubrov, Three-dimensional homogeneoous spaces with non-solvable transformation groups, 2017, available from https://arxiv.org/abs/1704.04393


\bibitem{Drozd} Ju.A. Drozd, Tame and wild matrix problems, in: Representations and Quadratic Forms, pp. 39--74, Akad. Nauk Ukrain. SSR, Inst. Mat., Kiev, 1979 (in Russian; English translation: Amer. Math. Soc. Transl. 128 (1986) 31--55).

\bibitem{Drozd1}
Ju.A. Drozd, Tame and wild matrix problems, in: Representation theory, II (Proc. Second Internat. Conf., Carleton Univ., Ottawa, Ont., 1979), pp. 242--258, Lecture Notes in Math., 832, Springer, Berlin-New York, 1980.

\bibitem{fut_tens}
V. Futorny, J.A. Grochow, V.V. Sergeichuk, Wildness for tensors, submitted to Linear Algebra Appl.

\bibitem{FKPS} V. Futorny, T. Klymchuk, A.P. Petravchuk, V.V. Sergeichuk, Wildness of the problems of classifying two-dimensional spaces of commuting linear operators and certain Lie algebras, Linear Algebra Appl. 536 (2018) 201--209.

\bibitem{gab_ukr}
P. Gabriel, L.A. Nazarova, A.V. Roiter, V.V. Sergeichuk, D.~Vossieck, Tame and wild subspace problems, Ukr. Math. J. 45 (1993) 335--372.

 \bibitem{GP}
I.M.~Gelfand, V.A.~Ponomarev,  Remarks on the classification of a pair of commuting linear transformations in a finite dimensional space, Functional Anal. Appl. 3 (1969) 325--326.

\bibitem{Olver2} A. Gonz\'alez-L\'opez, N. Kamran, P.J. Olver,  Lie algebras of vector fields in the real plane, Proc. London Math. Soc. (3) 64 (no. 2) (1992) 339--368.

\bibitem{de_semi}
D.D. de Oliveira, R.A. Horn, T. Klimchuk, V.V. Sergeichuk, Remarks
on the classification of a pair of commuting semilinear operators, Linear
Algebra Appl. 436 (2012) 3362--3372.


\bibitem{Lie1}  S. Lie, Theorie der Transformationsgruppen, Bd. 3, Teubner, Leipzig, 1893.


\bibitem{PetrSysak} A.P. Petravchuk, K.Ya. Sysak, Lie algebras associated with modules over polynomial rings, Ukrainian Math. J. 69 (2018) 1433--1444.

\bibitem{serg_can}
V.V. Sergeichuk, Canonical matrices for linear matrix problems, Linear Algebra Appl. 317 (2000) 53--102.


\end{thebibliography}
\end{document}